\setlist[enumerate]{leftmargin=.5in, rightmargin=.5in}
\renewcommand{\geq}{\geqslant}
\renewcommand{\leq}{\leqslant}
\renewcommand{\preceq}{\preccurlyeq}
\def\calP{\mathcal{P}}
\def\calA{\mathcal{A}}
\def\calK{\mathcal{K}}
\def\calI{\mathcal{I}}
\def\calD{\mathcal{D}}
\newtheorem{theorem}{Theorem}[section]
\newtheorem{proposition}[theorem]{Proposition}
\newtheorem{lem}[theorem]{Lemma}
\newtheorem*{thm-nonumb}{Theorem}
\theoremstyle{remark}
\theoremstyle{definition}
\newcommand{\tor}{\overset{r}{\to}}
\newcommand{\tob}{\overset{b}{\to}}
\title{Revisiting classical results on kernels in digraphs}
\author{Hélène Langlois \and Frédéric Meunier}
\address[Hélène Langlois]{LAMA, Université Paris-Est Créteil, Créteil, France}
\email{helene.langlois@u-pec.fr}
\address[Frédéric Meunier]{CERMICS, ENPC, Institut Polytechnique de Paris, Marne-la-Vallée, France}
\email{frederic.meunier@enpc.fr}
\begin{document}

\begin{abstract}
In a digraph, a kernel is a subset of vertices that is both independent and absorbing. Kernels have important applications in combinatorics and outside. Kernels do not always exist and finding sufficient conditions ensuring their existence is a key theoretical challenge. In this work, we revisit and generalize a few classical results of this sort, especially the Sands--Sauer--Woodrow theorem and the Galeana-Sánchez--Neumann-Lara theorem.
\end{abstract}

\keywords{Digraph; Kernel; Anti-hole; Clique-acyclicity; Kernel-solvability}

\maketitle

\normalem

\section{Introduction}

In a digraph, a subset of vertices is \emph{independent} if it does not contain adjacent vertices, and it is \emph{absorbing} if every vertex is either in the subset or has an outneighbor in it.  A \emph{kernel} is a subset of vertices that is both independent and absorbing. Formally, a kernel of a digraph $D$ is a subset $S$ of $V(D)$ that is independent and such that $V(D)\setminus S = N^-(S)$. Kernels form a fundamental topic of the theory of digraphs. Introduced by von Neumann and Morgenstern in the context of board games analysis~\cite{vonneumann1947}, they have found applications in other areas like  economy~\cite{igarashi_coalition_2017} and logic~\cite{walicki2017resolving}. They have been the subject of many research works and there are still several challenges about them. There are graphs with no kernel, e.g., the directed cycle of length three, and actually it is even $\NP$-complete to decide whether a digraph admits a kernel~\cite{chvatal_computational_1973}.

This paper aims at revisiting three classical results stating sufficient conditions for the existence of kernels, and at providing generalizations of them. In this paper, all directed graphs are simple : there is no loop and no parallel arcs, i.e. no two arcs with the same tail and the same head. (Note however that ``opposite'' arcs---the tail of one arc being the head of the other, and vice-versa---are allowed.)

\subsection{Digraphs with red and blue arcs} One of the most celebrated theorems about kernels is the following. It is actually a generalization of the Gale--Shapley ``marriage'' theorem~\cite{gale_college_1962}.

\begin{thm-nonumb}[Sands, Sauer, and Woodrow~\cite{sands1982monochromatic}]
    Let $D$ be a digraph whose arcs are colored with two colors. Then there is a subset $S$ of vertices such that no two vertices in $S$ are connected by a monochromatic directed path and such that from every vertex there is a monochromatic directed path ending in $S$.
\end{thm-nonumb}

 An equivalent formulation of this theorem is the following: \emph{Let $D$ be a digraph whose arcs are colored with two colors such that the restriction to each color forms a transitive digraph; then $D$ admits a kernel}. It is moreover easy and well-known that computing a kernel under the condition of the theorem can be done in polynomial time. See Fleiner~\cite{fleiner2002stable} for discussions around this result.

Our first contribution shows that it is possible to extend the existence result to a dramatically larger family of digraphs with red and blues arcs, while keeping polynomial-time computability. This family is obtained by replacing the condition of the Sands--Sauer--Woodrow theorem by the one depicted in Figure~\ref{fig:br}. The original condition is obtained by restricting each implication to its first alternative. Our result is also a generalization of a theorem by Champetier~\cite{champetier1989kernels} ensuring that so-called ``$M$-clique-acyclic orientations'' of comparability graphs always admit kernels, and of a theorem by Abbas and Saoula~\cite{abbas2005polynomial} that states the polynomiality of computing such a kernel under the same condition. $M$-clique-acyclic orientations have played an important role in the theory of kernels; see Section~\ref{subsec:terminology} for their definition. 
\begin{figure}
\centering
\includegraphics[]{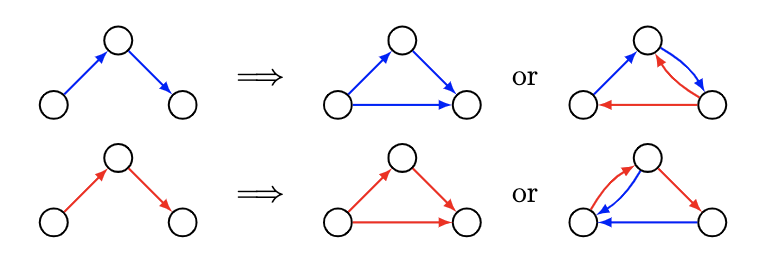}

\caption{\label{fig:br} The condition in Theorem~\ref{thm:br}}
\end{figure}

For two vertices $u,v$, we write $u\tob v$ (resp.\ $u\tor v$) to express the existence of a blue (resp.\ red) arc from $u$ to $v$. 

\begin{theorem}\label{thm:br}
    Let $D$ be a digraph whose arcs are colored in blue and red, such that the following conditions are both satisfied (see Figure~\ref{fig:br}):
    \begin{enumerate}[label=\textup{(\roman*)}]
        \item\label{item:b} If $u \tob v$ and $v \tob w$, then $u \tob w$ or ($w \tor u$ and $w \tor v$).
        \item\label{item:r} If $u \tor v$ and $v \tor w$, then $u \tor w$ or ($v \tob u$ and $w \tob u$).
    \end{enumerate}
    Then $D$ has a kernel, and it is possible to compute such a kernel in polynomial time. 
\end{theorem}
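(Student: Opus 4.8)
The plan is to reduce Theorem~\ref{thm:br} to the Sands--Sauer--Woodrow theorem (in its kernel formulation: a two-colored digraph in which each color class is transitive admits a kernel, computable in polynomial time) by passing to the \emph{monochromatic-reachability closure}. Concretely, from $D$ I would build an auxiliary two-colored digraph $D^*$ on the same vertex set, where $u \tob v$ in $D^*$ whenever there is a blue directed walk from $u$ to $v$ in $D$, and similarly $u \tor v$ in $D^*$ whenever there is a red directed walk. By construction each color class of $D^*$ is transitive, so $D^*$ has a kernel $S$ by Sands--Sauer--Woodrow, and this kernel can be found in polynomial time since the closures are computable by reachability. The crux is then to show that a kernel of $D^*$ is also a kernel of $D$: absorbency transfers immediately (a monochromatic walk is in particular an arc-path, and conversely), so the real content is \emph{independence} --- I must prove that conditions \ref{item:b} and \ref{item:r} force $S$ to contain no pair $u,v$ with an arc of $D$ between them. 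Equivalently, I need to verify that the ``monochromatic path'' relation behaves well enough that independence in $D^*$ implies independence in $D$.

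So the heart of the argument is the following claim: under \ref{item:b} and \ref{item:r}, if $u \tob v$ in $D$ then in $D^*$ we have $u \tob v$ \emph{or} $v \tor u$; and symmetrically if $u \tor v$ in $D$ then in $D^*$ we have $u \tor v$ or $v \tob u$. Granting this claim, any edge $uv$ of $D$ is ``monochromatically covered'' in $D^*$ in one of the two directions, hence $u$ and $v$ cannot both lie in the independent set $S$ of $D^*$; this gives independence in $D$ and finishes the proof. To prove the claim itself, I would argue by induction on the length of a shortest monochromatic walk witnessing $u \tob v$ (or $u \tor v$) in $D^*$, pushing the local conditions \ref{item:b}/\ref{item:r} along the walk. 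In the blue case: take a blue walk $u = x_0 \tob x_1 \tob \cdots \tob x_k = v$; applying \ref{item:b} to $x_0 \tob x_1 \tob x_2$ either shortcuts to $x_0 \tob x_2$ (reducing the length and recursing) or yields $x_2 \tor x_0$ and $x_2 \tor x_1$. One then has to track how these newly-created red arcs interact with the rest of the walk via \ref{item:r}, and show that the process terminates in one of the two desired conclusions. The symmetric bookkeeping handles the red case.

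I expect this inductive "walk-shortening with color flips" to be the main obstacle: the two conditions are not symmetric in an obvious involutive way (the blue shortcut produces a red arc pointing \emph{backwards} against both endpoints, while the red shortcut produces blue arcs pointing \emph{backwards} into the tail only), so the case analysis on how the flipped arcs chain together needs care, and one must set up the right induction measure --- likely lexicographic on (walk length, number of colour changes) or on a potential counting ``uncovered'' edges --- to guarantee termination. A secondary, lighter issue is to make the polynomial-time claim fully rigorous: computing the transitive closures of the two colour classes is clearly polynomial, Sands--Sauer--Woodrow is polynomial by hypothesis, and verifying that the returned set is a kernel of $D$ is direct, so no genuine difficulty arises there. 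If the direct induction proves unwieldy, a fallback is to phrase everything in the language of $M$-clique-acyclicity / kernel-solvability and invoke the Galeana-Sánchez--Neumann-Lara machinery (also alluded to in the excerpt), but I would first try the elementary reduction above.
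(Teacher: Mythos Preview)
Your reduction has the two obligations swapped, and the one you dismiss as ``immediate'' is precisely the one that fails. Since every arc of $D$ is a monochromatic path of length one, we have $A(D)\subseteq A(D^*)$; hence independence in $D^*$ \emph{trivially} implies independence in $D$. What does \emph{not} transfer is absorbency: a vertex $v\notin S$ is absorbed in $D^*$ as soon as there is some monochromatic \emph{path} from $v$ to $S$, but for $S$ to be a kernel of $D$ you need a single \emph{arc} from $v$ to $S$. Your parenthetical ``a monochromatic walk is in particular an arc-path, and conversely'' does not bridge this: a walk is not an arc. Concretely, take $V=\{a,b,c\}$ with $a\tob b$, $b\tob c$, $c\tor a$, $c\tor b$. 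Both conditions~\ref{item:b} and~\ref{item:r} hold (the blue $2$-path $a\tob b\tob c$ is handled by the second alternative of~\ref{item:b}, and there is no red $2$-path). In $D^*$ the set $\{c\}$ is a kernel, yet in $D$ the vertex $a$ has no arc to $c$, so $\{c\}$ is not a kernel of $D$. Thus a Sands--Sauer--Woodrow kernel of $D^*$ need not be a kernel of $D$, and your reduction breaks down. Note also that the ``claim'' you isolate (blue path from $u$ to $v$ implies $u\tob v$ or $v\tor u$ in $D$) is true---it is Lemma~\ref{lem:conn} of the paper---but it does not rescue the argument: when the second alternative $s\tor v$ occurs, $v$ is still not absorbed.

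The paper does not reduce to Sands--Sauer--Woodrow at all. It runs a direct Gale--Shapley-style iteration: it works with the family $\mathcal I$ of independent sets $I$ satisfying the one-sided condition $I\tor w\Rightarrow w\to I$, shows $\mathcal I\neq\varnothing$ (Lemma~\ref{lem:init}), and then, whenever $I\in\mathcal I$ is not a kernel, produces $I'\in\mathcal I$ by adding a carefully chosen unabsorbed vertex $v$ and deleting $I\cap N^-(v)$. Progress is measured by a partial order on the antichains of strongly connected components of the blue subdigraph (Lemmas~\ref{lem:apos} and~\ref{lem:hA}), and Lemma~\ref{lem:conn} is used only to certify that this potential strictly increases. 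The polynomial bound comes from the height bound $|\mathcal P|+1$ of Lemma~\ref{lem:hA}, not from an external black box.
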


This theorem is proved in Section~\ref{sec:red-blue}, where related results are also discussed. We emphasize that the conditions~\ref{item:b} and~\ref{item:r} are not symmetric. We do not know whether a more general statement with symmetric conditions is possible.

\subsection{Chords in odd holes}

A celebrated theorem about kernels is due to Richardson~\cite{richardson_weakly_1946}. It states that every digraph with no odd directed cycle has a kernel. The next theorem is a far-reaching generalization. A {\em chord} of a directed cycle is an arc whose endpoints are on the cycle but that does not belong to the cycle.

\begin{thm-nonumb}[Galeana-Sánchez and Neumann-Lara~\cite{galeana1984kernels}]\label{thm_gsnl}
Let $D$ be a digraph. Suppose that in $D$ each directed cycle of odd length has at least two chords with consecutive heads. Then $D$ has a kernel.
\end{thm-nonumb}

We extend this theorem by relaxing the condition on the odd cycles. The proof is given in Section~\ref{sec:odd-holes}. Two chords $(u,v)$ and $(w,t)$ are {\em crossing} if $u,w,v,t$ are distinct and come  either in that order around the cycle, or in the opposite order. They are {\em nested} if $u,w,t,v$ are distinct and come  either in that order, or in the opposite order. A chord is {\em odd} if the part of the cycle from its tail to its head is of odd length. It is {\em short} is this part is of length two.

\begin{theorem}\label{thm:cordes_nous}
    Let $D$ be a digraph. Suppose that in $D$ each odd directed cycle has
    \begin{itemize}
        \item two chords with consecutive heads, or
        \item two odd chords that are neither crossing nor nested (see Figure~\ref{fig:cordes_nous_same_dir}), or
        \item two crossing chords, one being short and the other being odd (see Figure~\ref{fig:cordes_nous_short_odd}).
    \end{itemize}
    Then $D$ has a kernel.
\end{theorem}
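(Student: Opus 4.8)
The plan is to use the standard template for kernel-existence results of this sort: argue by contradiction with a counterexample $D$ of minimum order. First I would note that the hypothesis is hereditary under induced subdigraphs: if $C$ is an odd directed cycle of an induced subdigraph $D'$, then $C$ is an odd directed cycle of $D$, and since $D'$ is induced, $C$ has the same chords in $D'$ and in $D$, so it still satisfies one of the three conditions. Hence every proper induced subdigraph of $D$ has a kernel, and $D$ has the familiar features of a minimal kernel-less digraph: it has no non-empty semikernel---no non-empty independent set $S$ with $N^+(S)\subseteq N^-(S)$---since such an $S$ could be extended through a kernel of the proper induced subdigraph $D-(S\cup N^-(S))$ into a kernel of $D$; a variant of that extension argument makes $D$ strongly connected; and since a digraph with no odd directed cycle has a kernel (Richardson), $D$ contains an odd directed cycle.

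The core of the proof is then to pick a well-chosen odd directed cycle $C$ of $D$, minimal for a suitable extremal order, and to use the two chords the hypothesis grants it in order either to produce an odd directed cycle of $D$ that is strictly smaller in that order---contradicting minimality---or to read a non-empty semikernel of $D$ off the independent part of $C$ cut out by the chords. Here some care is needed, since the odd-chord configurations in conditions (b) and (c) cannot appear on the most naive extremal cycles: cutting an odd directed cycle along one of its odd chords already yields a strictly shorter odd directed cycle and strictly fewer chords, so for a shortest odd cycle, or one with the fewest chords, only condition (a) can hold. Arranging the extremal parameter and the flow of the argument so that conditions (b) and (c) become available exactly on the odd cycles encountered where they are needed is itself a real part of the work.

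Granting such a cycle $C$, I would split into the three cases. If $C$ has two chords with consecutive heads, one is in the Galeana-Sánchez--Neumann-Lara situation and reroutes $C$ through the two chords, or exhibits a non-empty semikernel. If $C$ has two odd chords that are neither crossing nor nested, cutting along one of them---and, where endpoints coincide or the spanned arcs overlap, combining the two---produces a shorter odd directed cycle with fewer chords. If $C$ has two crossing chords, one short and one odd, the length-two shortcut from the short chord, composed with the odd crossing chord and the intervening arcs of $C$, again reroutes $C$ into a smaller odd directed cycle, or exposes a non-empty semikernel. In every case the extremal choice of $C$, hence the existence of $D$, is contradicted.

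I expect the main obstacle to be precisely this case analysis for the two new configurations, together with choosing the extremal order: one must check that each rerouting yields an honest directed cycle of $D$---distinct vertices, all arcs present---of odd length and strictly smaller in the chosen order, and one must set up the extremal parameter so that the odd chords on which conditions (b) and (c) depend actually occur on the cycles visited by the argument, rather than being ruled out as they are by the naive choices. The residual work is the bookkeeping that, in the subcases where no smaller odd cycle is produced, the set carved out of $C$ by the chords is genuinely a non-empty semikernel of $D$.
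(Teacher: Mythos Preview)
Your plan has a real gap, and it is exactly the one you flag yourself without resolving. You propose to select an odd directed cycle $C$ that is minimal for ``a suitable extremal order'' and then either reroute it via the guaranteed chords into a smaller odd cycle or ``read a non-empty semikernel of $D$ off the independent part of $C$.'' But you never say what that order is, and---more seriously---you never explain how an arbitrary odd cycle of $D$ could hand you a semikernel of the whole digraph: vertices on $C$ have outneighbors all over $D$, and nothing in the chord configuration controls those outneighbors. In the classical Galeana-S\'anchez--Neumann-Lara argument the cycle is not arbitrary; its vertices alternate between a fixed set $K'$ and its complement, and that alternation is what makes the chord analysis terminate in a semikernel. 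Your sketch has no analogue of this structure, and the observation you yourself make---that on a shortest odd cycle only condition (a) survives---shows that no purely length-based extremal choice can engage conditions (b) and (c) at all.

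The paper's proof avoids selecting an extremal cycle. Fixing any vertex $u$, it takes (by induction) a kernel $K$ of $D\setminus N^-[u]$, sets $K'=K\cup\{u\}$, and defines $S\subseteq K'$ as the vertices reachable from $I=K\cap N^+(u)$ by directed paths that alternate between $K'$ and $V\setminus K'$ and satisfy a backward-arc restriction. The odd cycle appears only when one assumes for contradiction that $u\in S$: a \emph{shortest} such alternating path $P$ back to $u$, closed with the arc $(u,v)$, is the odd cycle to which the hypothesis is applied. The three chord cases are then dispatched not by producing a globally smaller odd cycle, but by contradicting either the minimality of $P$ or its alternating/backward-arc structure. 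In particular, for the two new configurations one uses that every odd chord of this cycle must straddle $u$ (so two odd chords that are neither crossing nor nested are impossible), and that the endpoints of any odd chord lie outside $K'$ by parity (forcing any short crossing chord to join two vertices of $K$, or a vertex of $K$ to $u$, both impossible). Once $u\notin S$, one has $S\subseteq K$ independent, and the alternating-path definition yields $N^+(S)\subseteq N^-(S)$ directly. Building $S$ out of the inductive kernel $K$ is the idea your outline is missing.
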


We emphasize that directed cycles of length three under the condition of the theorem have necessarily two ``reversible'' arcs, i.e., there are two arcs $(u,v)$ and $(v,w)$ of the cycles such that the arcs $(v,u)$ and $(w,v)$ exist in the graph as well. In other words, directed graphs satisfying the condition of the theorem are ``$M$-clique-acyclic;'' see Section~\ref{sec:kernel-solvability} for a discussion of this fundamental notion in the area of kernels.

\begin{figure}
    \centering
\includegraphics[]{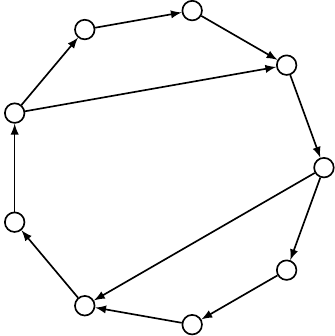}
    \caption{An odd directed cycle having two odd chords that are neither crossing nor nested}
    \label{fig:cordes_nous_same_dir}
\end{figure}

\begin{figure}
    \centering
\includegraphics[]{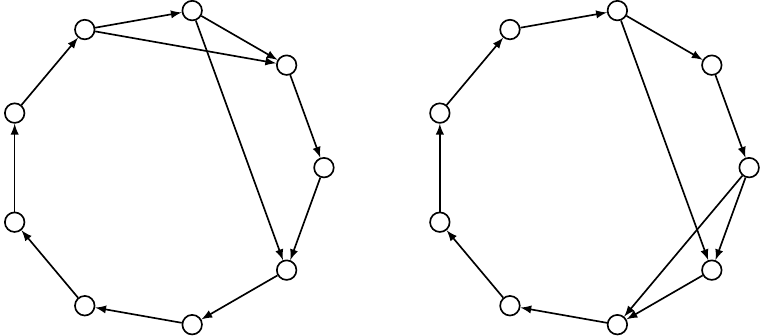}
    \caption{Odd directed cycles having two crossing chords, one short and the other being odd}
    \label{fig:cordes_nous_short_odd}
\end{figure}

There is no algorithmic versions of the Galeana-Sánchez--Neumann-Lara theorem in the literature. Note that the complexity status of checking that a graph satisfies the condition of the theorem is unclear. The same situation holds for our extension. 

\subsection{Clique-acyclic orientations of anti-holes}\label{subsec:clique-acyclic}

A digraph is \emph{clique-acyclic} if every clique has a vertex whose inneighborhood contains all other vertices of the clique (i.e., the clique induces a subgraph with a kernel). An undirected graph is \emph{kernel-solvable} if every clique-acyclic orientation admits a kernel. This is a fundamental notion about kernels, introduced by Berge and Duchet~\cite{berge_probleme_1983} in relation with the strong graph perfect conjecture. They even conjectured that every perfect graph is kernel-solvable, which has been proved to be true by Boros and Gurvich~\cite{boros_perfect_1996}.

We identify and partially fix an inaccuracy in the literature regarding kernel-solvability. To do so, we introduce a new notion, \emph{simple kernel-solvability}, defined as follows: an undirected graph is \emph{simple kernel-solvable} if every simple clique-acyclic orientation admits a kernel. (We remind the reader that in a simple orientation no edge is reversible, i.e., each edge is oriented in a single way.)

In Section~\ref{sec:kernel-solvability}, we prove the following.

\begin{proposition}\label{prop:odd_anti-hole}
    Every odd anti-hole $\overline C_n$ with $n \geq 9$ is simple kernel-solvable but not kernel-solvable.
\end{proposition}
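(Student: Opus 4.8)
The plan is to establish the two assertions separately; the first is elementary and the second is where the work lies.

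For ``\emph{not kernel-solvable}'' I would exhibit a single clique-acyclic orientation of $\overline{C}_n$ with no kernel. Label the vertices $0,1,\dots,n-1$ (mod $n$) so that the non-edges of $\overline{C}_n$ are exactly the consecutive pairs $\{i,i+1\}$; then the only independent sets are $\varnothing$, the singletons, and the consecutive pairs. Take the orientation $\sigma_n$ that directs every edge $\{i,i+2\}$ as $i\to i+2$ and makes every other edge reversible. I would check that $\sigma_n$ has no kernel (a singleton $\{v\}$ fails because $v+1$ is neither $v$ nor an in-neighbour of $v$; a consecutive pair $\{i,i+1\}$ fails because $i+2$ is adjacent to $i$ but not to $i+1$ while the only arc on $\{i,i+2\}$ is $i\to i+2$) and that $\sigma_n$ is clique-acyclic (a clique $K$ is a nonempty proper subset of $\{0,\dots,n-1\}$, hence — since $n$ is odd, so that $x\mapsto x+2$ is a single $n$-cycle — not closed under $x\mapsto x+2$; any $w\in K$ with $w+2\notin K$ satisfies $K\setminus\{w\}\subseteq N^-(w)$, because every $u\in K\setminus\{w\}$ reaches $w$, using reversibility unless $u=w-2$). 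This gives non-kernel-solvability, $\sigma_n$ necessarily using reversible arcs.

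For ``\emph{simple kernel-solvable}'', let $D$ be a simple clique-acyclic orientation of $\overline{C}_n$; I want to produce a kernel, which must be an absorbing consecutive pair. First I would note that for a simple orientation clique-acyclicity is equivalent to the absence of a directed triangle (a tournament all of whose triangles are transitive is transitive), so $D$ and all its sub-orientations are triangle-free. The engine is that $\overline{C}_n-v\cong\overline{P_{n-1}}$ is the complement of a path, hence a comparability graph, hence perfect; so by the Boros--Gurvich theorem~\cite{boros_perfect_1996} (or, for this comparability instance, by Champetier's theorem~\cite{champetier1989kernels}, contained in Theorem~\ref{thm:br}) the triangle-free simple orientation $D-v$ has a kernel $S_v$, which as in the first part is a consecutive pair. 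Assuming for contradiction that $D$ has no kernel, I would argue: every arc of $D-v$ into $S_v$ is already an arc of $D$, so $S_v$ absorbs in $D$ every vertex except possibly $v$; since $S_v$ is not a kernel of $D$ it must fail to absorb $v$, so the set of $D$-non-absorbed vertices of $S_v$ is exactly $\{v\}$. This forces $v\mapsto S_v$ to be injective, hence a bijection onto the $n$ consecutive pairs, so \emph{every} consecutive pair fails to absorb exactly one vertex.

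I would then extract a contradiction. Writing $e_i=+$ if $\{i,i+2\}$ is oriented $i\to i+2$ and $e_i=-$ otherwise, the vertices not absorbed by $\{i,i+1\}$ are $i+2$ (iff $e_i=+$), $i-1$ (iff $e_{i-1}=-$), and the ``far'' vertices $j$ (adjacent to both $i$ and $i+1$) with $i\to j$ and $i+1\to j$; since this set is always a singleton, one never has $e_{i-1}=-$ together with $e_i=+$, so the cyclic word $(e_0,\dots,e_{n-1})$ never reads $-$ then $+$ and is therefore constant. A constant word is impossible, though: if every $\{i,i+2\}$ is oriented $i\to i+2$, then the distance-$2$ edges form the Hamiltonian cycle $z_0,z_1,\dots,z_{n-1},z_0$ with $z_k=2k\bmod n$, carrying a directed $n$-cycle; triangle-transitivity applied to the cliques $\{z_k,z_{k+1},\dots,z_{k+m}\}$ for $m\le(n-3)/2$ forces $z_k\to z_{k+m}$ for all such $m$, and then $\{z_k,\,z_{k+(n-3)/2},\,z_{k-3}\}$ is a directed triangle — this is exactly where $n\ge 9$ is used, to make these three vertices pairwise adjacent and to have $3\le(n-3)/2$ — contradicting triangle-freeness; the all-$-$ case is symmetric. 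The hard part is this last step: a direct search for an absorbing consecutive pair founders on the ``far vertex'' alternatives, and the vertex-deletion reduction to the perfect graph $\overline{P_{n-1}}$ is precisely what controls them, by forcing each consecutive pair to have a \emph{unique} obstruction and thereby rigidifying the orientation of the distance-$2$ edges until it collides with clique-acyclicity.
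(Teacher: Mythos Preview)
Your argument is correct, and both halves follow a genuinely different line from the paper's.

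For ``not kernel-solvable,'' the paper gives no explicit orientation at all: it relies implicitly on the Boros--Gurvich equivalence (kernel-solvable $\Leftrightarrow$ perfect) together with the fact that odd anti-holes are imperfect. Your explicit orientation $\sigma_n$ is more self-contained and avoids invoking the Strong Perfect Graph Theorem.

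For ``simple kernel-solvable,'' the two proofs share one ingredient---the observation that if \emph{every} distance-$2$ edge is oriented $i\to i+2$ then transitivity propagates to a directed triangle once $n\geq 9$---but deploy it at opposite ends of the argument. The paper uses it \emph{first}, to guarantee a vertex $v_{i^\star}$ with both incident distance-$2$ edges pointing inward; it then deletes $N^-[v_{i^\star}]$ (not just $v_{i^\star}$), takes a kernel $K$ of the resulting perfect graph, and if $K\cup\{v_{i^\star}\}$ is not already a kernel, shows that $K$ itself is a semi-kernel of $D$ and concludes via Lemma~\ref{lem:semik}. Your route instead deletes one vertex at a time, observes that each $S_v$ is a consecutive pair failing to absorb exactly $v$, and turns the resulting bijection $v\mapsto S_v$ into the combinatorial constraint that the cyclic word $(e_i)$ never reads $-+$, forcing it constant; the shared triangle argument then gives the contradiction. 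Your approach trades the paper's semi-kernel machinery for a clean counting step, and is arguably more direct; the paper's approach has the virtue of reusing the semi-kernel framework from Section~\ref{sec:odd-holes} and of localizing the use of $n\ge 9$ to a single structural lemma at the outset.
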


In the literature, it has even been claimed that $\overline C_7$ is 
kernel-$M$-solvable (see below for the definition), which would imply that kernel-$M$-solvability does not characterize perfect graphs. The argument of the literature is actually not correct since kernel-$M$-solvability implies simple kernel-solvability, and $\overline C_7$ is not even simple kernel-solvable. This leaves open the exact relation between perfect graphs and kernel-$M$-solvability. See Section~\ref{sec:kernel-solvability} for a detailed discussion.


\subsection{Some terminology and notation}\label{subsec:terminology} Given a digraph and two vertices $u,v$, we write $u \to v$ to express the existence of an arc $(u,v)$. This notation is extended to subsets of vertices: given a vertex $u$ and a subset $U$ of vertices, $u \to U$ (resp.\ $U \to u$) means the existence of at least one arc from $u$ to some vertex of $U$ (resp.\ from some vertex of $U$ to $u$). In case where $u \in U$, this means the existence of arc to another vertex (resp.\ from another vertex) distinct from $u$.

An arc $(u,v)$ is {\em reversible} if $(v,u)$ is also an arc. A digraph is {\em transitive} if $u\to v$ and $v \to w$ imply $u \to w$. The set of outneighbors (resp.\ inneighbors) of a vertex $v$ is denoted by $N^+(v)$ (resp.\ $N^-(v)$). The set $N^+(v) \cup \{v\}$ (resp.\ $N^-(v)\cup\{v\}$) is denoted by $N^+[v]$ (resp.\ $N^-[v]$). Given a subset $U$ of vertices, $N^+(U)$ (resp.\ $N^-(U)$) denotes $\bigl(\bigcup_{v \in U}N^+(v)\bigl) \setminus U$ (resp.\ $\bigl(\bigcup_{v \in U}N^-(v)\bigl) \setminus U$). 

An orientation of a graph is {\em $M$-clique-acyclic} if every directed cycle of length three has at least two reversible arcs. A undirected graph is \emph{kernel-$M$-solvable} if every $M$-clique-acyclic orientation has a kernel.


\subsection*{Acknowledgments}
This research was partially supported by Labex Bézout ``Models and Algorithms: From the Discrete to the Continuous.''

\section{``Red--blue'' arcs}\label{sec:red-blue}

\subsection{Proof of Theorem~\ref{thm:br}}

\subsubsection{The poset of antichains}\label{subsec:poset}
The following way of extending a partial order on antichains will be useful in the proof of Theorem~\ref{thm:br}. Let $(\calP,\preceq)$ be a poset. Let $\calA$ be the collection of all antichains of this poset. We extend $\preceq$ on $\calA$ by setting $\alpha\preceq\alpha'$ for two antichains $\alpha,\alpha'\in\calA$ whenever for each element $x$ in $\alpha$, there exists an element $x'$ in $\alpha'$ such that $x \preceq x'$. We believe this construction and the following lemma are well-known but we have not been able to find any reference in the literature. 

\begin{lem}\label{lem:apos}
    With this extended definition, $\preceq$ is a partial order on $\calA$.
\end{lem}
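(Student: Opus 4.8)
The plan is to verify the three defining properties of a partial order — reflexivity, antisymmetry, and transitivity — for the extended relation $\preceq$ on the set $\calA$ of antichains, treating each in turn.

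First, reflexivity: for any antichain $\alpha$, every element $x \in \alpha$ satisfies $x \preceq x$ by reflexivity of the original poset order, so $x$ itself serves as the required witness, giving $\alpha \preceq \alpha$. Transitivity is almost as direct: if $\alpha \preceq \alpha'$ and $\alpha' \preceq \alpha''$, then given $x \in \alpha$ we first obtain $x' \in \alpha'$ with $x \preceq x'$, then $x'' \in \alpha''$ with $x' \preceq x''$, and transitivity in $(\calP,\preceq)$ yields $x \preceq x''$; since $x$ was arbitrary, $\alpha \preceq \alpha''$.

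The substantive step is antisymmetry: suppose $\alpha \preceq \alpha'$ and $\alpha' \preceq \alpha$, and we must show $\alpha = \alpha'$. The idea is to show each element of $\alpha$ lies in $\alpha'$ (and symmetrically). Take $x_0 \in \alpha$. Using the two hypotheses alternately, build a sequence $x_0 \preceq x_1' \preceq x_2 \preceq x_3' \preceq \cdots$ where the even-indexed terms lie in $\alpha$ and the odd-indexed terms lie in $\alpha'$. Since $\alpha \cup \alpha'$ is finite, some term repeats; but the sequence is non-decreasing in a partial order, so everything between two equal terms must be equal to them, forcing $x_0 = x_1' = x_2 = \cdots$. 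In particular $x_0 = x_1' \in \alpha'$. Hence $\alpha \subseteq \alpha'$, and by the symmetric argument $\alpha' \subseteq \alpha$, so $\alpha = \alpha'$.

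The main obstacle is the antisymmetry argument, specifically the fact that it genuinely relies on the antichains being finite (or at least that the chains $x_0 \preceq x_1' \preceq \cdots$ cannot ascend forever); one should make sure this finiteness is available in the intended application, which it is, since the posets arising in the proof of Theorem~\ref{thm:br} are finite. A slightly slicker phrasing avoiding the sequence: from $\alpha \preceq \alpha' \preceq \alpha$ one gets, for each $x \in \alpha$, some $x' \in \alpha'$ with $x \preceq x'$ and then some $x'' \in \alpha$ with $x' \preceq x''$, so $x \preceq x''$ with both $x, x'' \in \alpha$; since $\alpha$ is an antichain this forces $x = x'' $, whence $x = x'$ and $x \in \alpha'$. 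This is cleaner and needs only that $\alpha$ and $\alpha'$ are antichains, not finiteness — so I would use this version.
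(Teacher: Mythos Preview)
Your proposal is correct, and the ``slicker'' antisymmetry argument you settle on at the end is essentially the paper's proof: the paper likewise picks $x\in\alpha$, lifts it to $x'\in\alpha'$ and then to $x''\in\alpha$, and uses that $\alpha$ is an antichain to force $x=x''$ (the paper phrases this by contradiction, starting from $x\in\alpha\setminus\alpha'$, but the logic is identical). Your initial sequence-and-finiteness detour is unnecessary, as you yourself observe.
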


\begin{proof}
    Reflexivity and transitivity are immediate. We establish antisymmetry. Suppose for a contradiction that there exist two antichains $\alpha \neq \alpha'$ such that $\alpha\preceq\alpha'$ and $\alpha'\preceq\alpha$. Since $\alpha \neq \alpha'$, we can assume without loss of generality that there exists $x \in \alpha$ such that $x \notin \alpha'$. Since $\alpha \preceq \alpha'$, there exists $x' \in \alpha'$ such that $x \prec x'$. Since $\alpha' \preceq \alpha$, there exists $x''\in\alpha$ such that $x' \preceq x''$, and thus such that $x \prec x''$; a contradiction.
\end{proof}

The next lemma will be useful to establish the polynomiality of the computation of the kernel. Actually, only the upper bound on the size of a chain is needed for the proof but we establish the existence of a chain matching the upper bound for sake of completeness.

\begin{lem}\label{lem:hA}
    If $\calP$ is finite, then the maximal size of a chain in $(\calA,\preceq)$ is $|\calP|+1$.
\end{lem}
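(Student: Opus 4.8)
The plan is to establish the two bounds separately: first that every chain in $(\calA,\preceq)$ has at most $|\calP|+1$ elements, and then that this bound is achieved by an explicit chain.

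For the upper bound, suppose $\alpha_0 \prec \alpha_1 \prec \cdots \prec \alpha_k$ is a chain of antichains. The natural strategy is to assign to each antichain $\alpha$ a ``weight'' in $\{0,1,\dots,|\calP|\}$ that is strictly monotone along the chain; the obvious candidate is the size of the down-set $\downarrow\!\alpha := \{x \in \calP : x \preceq y \text{ for some } y \in \alpha\}$, which is a subset of $\calP$. One checks that $\alpha \preceq \alpha'$ implies $\downarrow\!\alpha \subseteq \downarrow\!\alpha'$, so along the chain these down-sets are nested. What remains is to verify the inclusions are strict: if $\alpha \prec \alpha'$ but $\downarrow\!\alpha = \downarrow\!\alpha'$, then every element of $\alpha'$ lies below some element of $\alpha$ (since $\alpha' \subseteq \downarrow\!\alpha' = \downarrow\!\alpha$), which gives $\alpha' \preceq \alpha$, and combined with $\alpha \preceq \alpha'$ and Lemma~\ref{lem:apos} forces $\alpha = \alpha'$, a contradiction. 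Hence $|\downarrow\!\alpha_0| < |\downarrow\!\alpha_1| < \cdots < |\downarrow\!\alpha_k|$, and since these are $k+1$ distinct values in $\{0,1,\dots,|\calP|\}$ (note $\downarrow\!\emptyset = \emptyset$ is allowed, and $\downarrow\!\alpha$ can be all of $\calP$), we get $k \leq |\calP|$, i.e., the chain has at most $|\calP|+1$ elements.

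For the lower bound, I would exhibit a chain of length $|\calP|+1$. A convenient choice: linearly extend $\preceq$ to a total order $x_1, x_2, \dots, x_{|\calP|}$ compatible with $\preceq$ (so $x_i \preceq x_j$ implies $i \leq j$), and consider the antichains $\alpha_i$ consisting of the set of maximal elements of $\{x_1,\dots,x_i\}$, for $i = 0, 1, \dots, |\calP|$, with $\alpha_0 = \emptyset$. Each $\alpha_i$ is an antichain by construction, and $\downarrow\!\alpha_i = \{x_1,\dots,x_i\}$, so these down-sets are strictly increasing; since $\alpha \preceq \alpha'$ whenever $\downarrow\!\alpha \subseteq \downarrow\!\alpha'$ (indeed, every element of $\alpha$ lies in $\downarrow\!\alpha \subseteq \downarrow\!\alpha'$, hence below some element of $\alpha'$), we get $\alpha_0 \prec \alpha_1 \prec \cdots \prec \alpha_{|\calP|}$, a chain with $|\calP|+1$ elements.

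The main obstacle is purely bookkeeping: one must be careful that the ordering $\preceq$ on $\calA$ as defined really does satisfy ``$\downarrow\!\alpha \subseteq \downarrow\!\alpha' \iff \alpha \preceq \alpha'$'' — the forward direction of this equivalence is the one that needs the small argument above (using that elements of $\alpha'$ are themselves in $\downarrow\!\alpha'$), while the reverse direction is immediate from the definition. Once this equivalence is in hand, both bounds follow cleanly, and antisymmetry from Lemma~\ref{lem:apos} is only needed to rule out the degenerate case in the strictness check. No serious difficulty is expected.
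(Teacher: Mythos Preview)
Your proof is correct, but it proceeds quite differently from the paper's argument. You exploit the order-isomorphism between $(\calA,\preceq)$ and the lattice of down-sets of $\calP$ under inclusion: once the equivalence $\alpha \preceq \alpha' \iff \downarrow\!\alpha \subseteq \downarrow\!\alpha'$ is established, both bounds drop out immediately by counting cardinalities of nested subsets, and the explicit chain comes from a linear extension. The paper instead argues each bound by hand: for the upper bound it picks, for each $i\geq 2$, an element $x_i \in \alpha_i \setminus \alpha_{i-1}$ and shows all the $x_i$ are distinct via a short contradiction (if $x_j = x_k$ with $j<k$, chasing the relations $\alpha_j \preceq \alpha_{k-1} \preceq \alpha_k$ produces two comparable elements inside the antichain $\alpha_k$); for the lower bound it proceeds by induction on $|\calP|$, removing a maximal element and then reinserting it on top of the inductively obtained chain. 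Your route is more conceptual and reusable (it essentially identifies $(\calA,\preceq)$ with a well-understood lattice), while the paper's is self-contained and avoids introducing the down-set correspondence explicitly; both are short and neither has a real advantage in this context.
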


\begin{proof}
    Let $\alpha_1\prec\alpha_2\prec\cdots\prec\alpha_{\ell}$ be a chain of $(\calA,\preceq)$. For every $i \in \{2,\ldots,\ell\}$, there exists $x_i\in\calP$ such that $x_i \in \alpha_i\setminus\alpha_{i-1}$. We claim that all $x_i$'s are distinct. Suppose for a contradiction that there exist $j<k$ such that $x_j = x_k$. Let $y \in \alpha_{k-1}$ be such that $x_j \preceq y$. Note that since $x_k$ does not belong to $\alpha_{k-1}$, we have actually $x_j \neq y$. Moreover, there exists $z \in \alpha_k$ such that $y \preceq z$. We have thus $x_k \prec z$, with both elements belonging to $\alpha_k$; a contradiction. This shows that every chain in $(\calA,\preceq)$ is of size at most $|\calP|+1$.

    We prove now by induction on $|\calP|$ that there exists a chain in $(\calA,\preceq)$ of size $|\calP|+1$. This is obviously true if $\calP = \varnothing$. Assume now that $\calP \neq \varnothing$. Let $x$ be a maximal element of $\calP$. By induction, $\calA$ possesses a chain of size $|\calP|$ such that none of its elements contains $x$. Let $\alpha$ be the maximal antichain of $(\calP,\preceq)$ in this chain of $(\calA,\preceq)$. Add $x$ to $\alpha$, and remove from it all elements $y$ such that $y \preceq x$. This leads to a new antichain $\alpha'$ such that $\alpha \prec \alpha'$, showing that there exists a chain in $(\calA,\preceq)$ of size $|\calP|+1$.
\end{proof}

\subsubsection{Two lemmas}

The notation $\tob$ and $\tor$ is extended to subsets of vertices: given a vertex $u$ and a subset $U$ of vertices, $u \tob U$ (resp.\ $U \tob u$) means the existence of at least one blue arc from $u$ to some vertex of $U$ (resp.\ from some vertex of $U$ to $u$). In case where $u \in U$, this means the existence of a blue arc to another vertex (resp.\ from another vertex) distinct from $u$. And similarly for $\tor$.

Consider a digraph $D$ as in Theorem~\ref{thm:br}. Such a digraph satisfies some properties, which we state as lemmas since they will be useful in the proof of Theorem~\ref{thm:br}.

\begin{lem}\label{lem:init}
    There is a vertex $v$ such that the implication $v \tor w \implies w \to v $ holds for all vertices $w$.
\end{lem}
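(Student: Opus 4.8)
The plan is to find a vertex that is, in a suitable sense, ``maximal'' for a partial order built from the blue arcs, and then to check that condition~\ref{item:r} forces every red arc leaving such a vertex to be reversible. First I would like to understand the structure that condition~\ref{item:b} imposes: it says that if the blue arcs ``chain'' from $u$ to $v$ to $w$ without the shortcut $u \tob w$, then $w$ beats both $u$ and $v$ in red. I would define a relation on $V(D)$ by declaring $u \preceq_b v$ whenever $u = v$ or $u \tob v$, and try to argue that on an appropriate subset (or after a suitable quotient) this is a partial order, or at least has maximal elements; more robustly, I would simply consider a vertex $v$ that is a sink of the blue digraph restricted to its strongly connected component structure — i.e., pick $v$ so that among all vertices reachable from $v$ by blue arcs there is no blue arc leaving $v$'s ``blue class'' — but since the blue digraph need not be transitive, I expect the cleanest route is to take $v$ maximizing some potential.

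The key steps, in order, would be: (1) pick a vertex $v$ with no blue out-arc, if one exists; then for any $w$ with $v \tor w$, I must produce the arc $w \to v$. If additionally $w \tor v$ we are done trivially, so assume not; the difficulty is that a single red arc $v \tor w$ gives no immediate handle via condition~\ref{item:r}, which needs a red path of length two. So (2) I would instead consider, among all vertices, one that minimizes the set of vertices it can blue-dominate, or better: define the binary relation ``$w$ is blue-below $v$'' and take $v$ to be maximal; the point of condition~\ref{item:b} is precisely that this ``blue-reachability'' relation, once one collapses the pairs $u,w$ with $u \tob w$ and $w \tor u, w \tor v$ appropriately, behaves well. (3) Having fixed a maximal-type $v$, take any $w$ with $v \tor w$; I want to derive a red path through $v$. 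If there is some $w'$ with $w' \tor v$, then $w' \tor v \tor w$ triggers~\ref{item:r}, giving $w' \tor w$ or ($v \tob w'$ and $w \tob w'$); chasing these alternatives, using the maximality of $v$ for the blue relation to exclude $v \tob w'$, I expect to be pushed toward a configuration that yields $w \to v$. The case where $v$ has no red in-arc at all has to be handled separately, but there condition~\ref{item:r} is applied with $v$ in the middle: from $v \tor w$, if $w \tor x$ for some $x$ then $v \tor x$ or ($w \tob v$ and $x \tob v$) — the second alternative directly gives $w \to v$ (reversing the red arc through blue), and iterating along a red path out of $v$ will terminate (the digraph is finite) in a vertex with no red out-neighbor, at which point a short argument closes the loop.

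The main obstacle I anticipate is choosing the right notion of ``maximal vertex'' so that condition~\ref{item:b}, which is only a local statement about blue paths of length two, can be leveraged globally: the blue digraph is not transitive, so ``having no blue out-arc'' is too strong a demand (such a vertex may not exist), while ``maximal in blue-reachability'' is subtle because the escape clause in~\ref{item:b} introduces red arcs one must track. I expect the correct move is to argue by contradiction: suppose no vertex $v$ satisfies the conclusion, i.e., for every $v$ there is a red arc $v \tor w$ that is not reversed, and then build an infinite or cyclic chain of such ``bad'' red arcs, using conditions~\ref{item:b} and~\ref{item:r} to show each bad arc forces another strictly ``higher'' bad arc in some finite partial order, contradicting finiteness. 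Making this potential function explicit — likely counting how many vertices lie blue-above a given vertex, or using the poset of Lemma~\ref{lem:apos} applied to the blue order — is where the real work lies; the rest is case analysis on the two alternatives in each implication.
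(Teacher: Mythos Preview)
Your plan spends most of its effort on the wrong color. The blue order, blue maximality, the poset of Lemma~\ref{lem:apos}, and any potential counting ``how many vertices lie blue-above a given vertex'' play no role whatsoever in the proof of this lemma; only condition~\ref{item:r} is needed. Trying to select $v$ as blue-maximal leads nowhere: a single red arc $v \tor w$ gives no purchase on~\ref{item:r}, as you yourself observe, and there is no mechanism by which blue maximality of $v$ constrains the red out-arcs of $v$.

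Your very last suggestion---assume no vertex works, so every vertex has a ``bad'' red out-arc $(v,w)$ with $w \not\to v$, and chase these until you find a cycle---is the right idea, but the contradiction is much more direct than you expect and does not require any potential function. Let $R$ be the set of red arcs $(v,w)$ with $w \not\to v$. A sink of the subdigraph $(V(D),R)$ is exactly the vertex you want, so it suffices to show this subdigraph is acyclic. Take a shortest cycle in it. It cannot have length two (the arc $(w,v)$ would reverse $(v,w)$). For length at least three, apply~\ref{item:r} to two consecutive arcs $u \tor v \tor w$: either $u \tor w$, contradicting minimality of the cycle unless $(u,w) \notin R$, but then $w \to u$ reverses an arc of the cycle and contradicts membership in $R$; or $v \tob u$ and $w \tob u$, and in particular $v \to u$, again contradicting that the cycle arc $(u,v)$ lies in $R$. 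That is the whole proof. The ``real work'' you anticipate---building a blue-based potential---is a phantom.
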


\begin{proof}
    Consider the set $R$ of red arcs $(v,w)$ such that $w \not\to v$ (i.e., the set of red arcs $(v,w)$ that witness the fact the $v$ does not satisfy the implication). We claim that the restriction of $D$ to arcs in $R$ is acyclic. Suppose for a contradiction that there is a cycle, and consider such a cycle of minimal length. It cannot be of length two because this would contradict the definition of $R$. It is thus of length at least three. By \ref{item:r} and by minimality of the length, there is an arc of the cycle whose opposite arc exists in blue in the digraph. Such an arc contradicts the definition of $R$. This proves the acyclicity. The restriction to arcs in $R$ has thus a sink. Such a sink necessarily satisfies the implication.
\end{proof}

\begin{lem}\label{lem:conn}
     If there is a blue dipath from a vertex $u$ to a vertex $v$, then $u \tob v$ or $v \tor u$.
\end{lem}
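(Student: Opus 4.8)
The plan is to prove Lemma~\ref{lem:conn} simultaneously with its ``red analogue'': \emph{if there is a red dipath from $u$ to $v$, then $u\tor v$ or $v\tob u$}. Both statements will be proved by induction on the length $k$ of the dipath. The key observation is that, although conditions~\ref{item:b} and~\ref{item:r} are not symmetric, they play symmetric roles here: condition~\ref{item:b} turns a blue $2$-path into a blue shortcut or into ``backward'' red arcs, while condition~\ref{item:r} turns a red $2$-path into a red shortcut or into ``backward'' blue arcs.

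For the blue statement, I would take a blue dipath $u=x_0\tob x_1\tob\cdots\tob x_k=v$ with $k\geq 2$ (the case $k=1$ being trivial) and apply condition~\ref{item:b} to each triple $x_{j-1}\tob x_j\tob x_{j+1}$ for $j\in\{1,\dots,k-1\}$. If some triple yields the blue shortcut $x_{j-1}\tob x_{j+1}$, then deleting $x_j$ produces a blue dipath from $u$ to $v$ of length $k-1$, and the (blue) induction hypothesis applies. Otherwise every triple yields $x_{j+1}\tor x_{j-1}$ and $x_{j+1}\tor x_j$; the arcs $x_{j+1}\tor x_j$ together with the arc $x_2\tor x_0$ then provide a red dipath $v=x_k\tor x_{k-1}\tor\cdots\tor x_2\tor x_0=u$ of length $k-1$, whence the (red) induction hypothesis gives $v\tor u$ or $u\tob v$, which is exactly the conclusion sought.

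The red statement is handled by the mirror argument: from a red dipath $u=x_0\tor\cdots\tor x_k=v$ with $k\geq 2$, condition~\ref{item:r} applied to the consecutive triples gives, for each $j\in\{1,\dots,k-1\}$, either a red shortcut $x_{j-1}\tor x_{j+1}$ (hence a shorter red dipath, dealt with by induction) or the two blue arcs $x_j\tob x_{j-1}$ and $x_{j+1}\tob x_{j-1}$; in the latter situation the arcs $x_j\tob x_{j-1}$ together with the arc $x_k\tob x_{k-2}$ provide a blue dipath $v=x_k\tob x_{k-2}\tob x_{k-3}\tob\cdots\tob x_1\tob x_0=u$ of length $k-1$, and the (blue) induction hypothesis gives $v\tob u$ or $u\tor v$.

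The routine verifications are that each auxiliary dipath has length $k-1$ and runs through pairwise distinct vertices --- immediate, since it consists of a subset of the $x_i$'s linked by the arcs furnished above --- and a quick look at $k=2$, where the auxiliary dipath is a single arc. The subtlety I would watch for, and the reason for running the blue and red statements in tandem rather than trying to iterate condition~\ref{item:b} alone, is precisely the asymmetry of~\ref{item:b} and~\ref{item:r}: in the blue case the extra ``skip'' arc needed to reach $u$ sits at the start of the path ($x_2\tor x_0$), whereas in the red case it sits at the end ($x_k\tob x_{k-2}$), so the reversed auxiliary dipath must be assembled slightly differently in the two cases. I do not anticipate any further difficulty.
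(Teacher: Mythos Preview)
Your proof is correct and rests on the same idea as the paper's: condition~\ref{item:b} converts a blue dipath with no blue shortcut into a shorter backward red dipath, and condition~\ref{item:r} then handles that red dipath symmetrically. The paper packages this as a nested minimality argument (take a minimum-length blue $P$, then a minimum-length red $Q$ among the vertices of $P$, and show $|Q|=1$), whereas you run a simultaneous induction on the blue statement and its red analogue---a slightly cleaner organization that also yields the red statement as a byproduct.
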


\begin{proof}
    Consider a minimum-length blue dipath $P$ from a vertex $u$ to a vertex $v$. If it is of length one, then $u \tob v$. So suppose that the length of $P$ is at least two. By \ref{item:b} and by minimality of the length of $P$, there exists a red dipath from $v$ to $u$ that uses vertices of $P$ in the opposite order as that induced by $P$. Note that if $P$ is of odd length, the existence of such a path requires to use a red arc that is the opposite of an arc of $P$. Let $Q$ be such a red dipath of minimal length. By \ref{item:r} and by minimality of the length of both $P$ and $Q$, the dipath $Q$ is of length one and we have $v \tor u$.
\end{proof}

\subsubsection{The proof}

\begin{proof}[Proof of Theorem~\ref{thm:br}]
    Let $D^b$ be the digraph obtained from $D$ by keeping only the blue arcs and denote by $\calK^b$ the collection of its strongly connected components. Let $K \preceq K'$ holds for $K,K'\in \calK^b$ if there is a blue dipath from $K$ to $K'$. This makes $(\calK^b,\preceq)$ a poset. We extend the definition of $\preceq$ on the antichains of this poset as in Section~\ref{subsec:poset}. By Lemma~\ref{lem:apos}, $\preceq$ is a partial order on these antichains. By Lemma~\ref{lem:conn}, the strongly connected components of $D^b$ intersected by an independent set $I$ of $D$ form an antichain of $(\calK^b,\preceq)$, which we denote by $\alpha_I$.
  
    Let $\calI$ be the set of independent sets $I$ of $D$ such that $I \tor w \implies w \to I $ holds for all vertices $w$. By Lemma~\ref{lem:init}, $\calI$ is non-empty (and an element from $\calI$ can be determined by simply scanning the vertices of $D$).  We describe now a procedure to modify such an $I \in \calI$ when it is not a kernel, in order to get a new element $I'$ in $\calI$ such that $\alpha_I \prec \alpha_{I'}$. By finiteness, this will show the existence of a kernel. With Lemma~\ref{lem:hA}, this will even imply the polynomiality of the method.
    
    Suppose that $I \in \calI$ is not a kernel and let $U \coloneqq V(D)\setminus(I \cup N^-(I))$. This is the set of all vertices that are neither in $I$ nor absorbed by $I$. Since $I$ is not a kernel, $U$ is non-empty. According to Lemma~\ref{lem:init} applied on $D[U]$, there exists a vertex $v \in U$ such that $v \tor w \implies w \to v$ holds for all vertices $w \in U$. (Again, determining such a $v$ can be done simply by scanning the vertices of $D$.)
    
    If $I \not\to v$, then adding $v$ to $I$ leads to $I \cup \{v\} \in \calI$ such that $\alpha _I \prec \alpha_{I\cup\{v\}}$. Suppose now that $I \to v$. Define $I'$ by removing from $I$ all vertices in $N^-(v)$ and by adding $v$. The set $I'$ is independent since $v \not \to I$ by definition of $v$ and all vertices of $I$ with an arc to $v$ have been removed from $I$. Moreover, let $w$ be any vertex such that $I' \tor w$. We claim that $w \to I'$. Three cases have to be considered.

\smallskip

\begin{easylist}\ListProperties(Style1*=\scshape$\bullet$, Style2*=$\diamond\;$, Hide=2, Progressive*=3ex, Space=0.2cm, Space*=0.2cm)
    # \; {\em $w \to I \setminus N^-(v)$}. Then $w \to I'$ since $I \setminus N^-(v) \subseteq I'$ by definition.

    # \; {\em $w \to I \cap N^-(v)$}. Let $u$ be a vertex in $I \cap N^-(v)$ such that $w \to u$. We have $u \tob v$ because $v \not\to I$. If $w \tob u$, then \ref{item:b} implies $w \tob v$ because $v \not\to I$ and we are done. So, suppose that $w \tor u$, and let $v' \in I'$ such that $v' \tor w$. By \ref{item:r}, we have $w \tob v'$ (here, we use the fact that $v' \not \tor u$, which holds either because $v' \in I$ or because $v' = v$ and $v \not\to I$). Therefore, in this case, whatever is the color of the arc $(w,u)$, we have $w \to I'$.
    
    # \; {\em $w \not\to I$}. This means that $v \tor w$. Since $w$ does not belong to $I'$, it is distinct from $v$. Since $v\not\to I$, the vertex $w$ does not belong to $I \cup N^-(I)$. Therefore, the vertex $w$ belongs to $U$ and by definition of $v$ we have $w \to v$, which implies $w \to I'$.
    \end{easylist}

    \smallskip
  
Therefore, $I' \in \calI$. Moreover, no vertex of $I \cap N^-(v)$ is in the same strongly connected component of $D^b$ as $v$: indeed, the existence of a blue dipath from $v$ to $I \cap N^-(v)$ would imply by Lemma~\ref{lem:conn} $v \tob I$ or $I \tor v$, both situations contradicting the fact that $v$ belongs to $U$. This means $I' \neq I$. Since $v \in U$, every arc from $I \cap N^-(v)$ to $v$ are blue, and hence $\alpha_I \prec \alpha_{I'}$.
\end{proof}

\subsection{Discussion}
As noted in the introduction, Theorem~\ref{thm:br} is also a generalization of a theorem by Champetier~\cite{champetier1989kernels}. A {\em comparability graph} is an undirected graph admitting a transitive orientation. Champetier's theorem is a special case of the theorem of Boros and Gurvich about perfect graphs and kernel-solvability. Champetier's theorem, proved around a decade before, states: {\em Every comparability graph is kernel-$M$-solvable.} A way to see that Theorem~\ref{thm:br} implies this theorem goes as follows. Take a transitive orientation of a comparability graph. Color in red each arc if it is oriented the same way as in the transitive orientation, and in blue otherwise. This coloring satisfies the condition of Theorem~\ref{thm:br}. Actually, the original proof itself was considering such a coloring. It has inspired the proof technique we use and that has also been used elsewhere; see, e.g., the proof of kernel-$M$-solvability of perfectly orderable graphs by Blidia and Engel~\cite{blidia1992perfectly}.

Another result in the same vein as Theorem~\ref{thm:br} can be proved with similar techniques.

\begin{figure}
\centering
\includegraphics[scale=0.8]{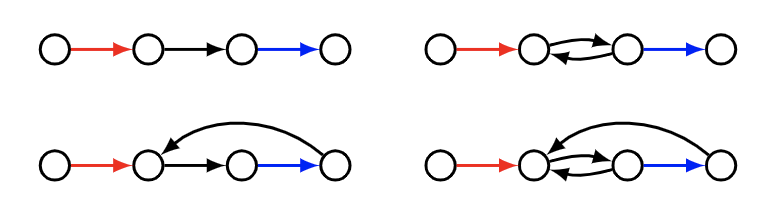}

\caption{\label{fig:4br} The induced structures forbidden by condition~\ref{cond4br} in Proposition~\ref{prop:4br}}
\end{figure}

\begin{proposition}\label{prop:4br}
    Let $D$ be a digraph whose arcs are colored in blue and red, such that the following conditions are both satisfied (see Figure~\ref{fig:4br} for the forbidden structures):
    \begin{enumerate}[label=\textup{(\roman*)}]
        \item There is no monochromatic directed cycle.\label{condmono4br}
        \item If $(v_1,v_2),(v_2,v_3),(v_3,v_4)$ is a (closed or open) directed path such that $v_1 \tor v_2$ and $v_3 \tob v_4$, then its vertices induce at least another arc not ending at $v_2$.\label{cond4br}
    \end{enumerate}
     Then $D$ has a kernel.
\end{proposition}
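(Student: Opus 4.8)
The plan is to adapt the proof architecture of Theorem~\ref{thm:br}, since Proposition~\ref{prop:4br} is deliberately phrased as a companion result provable ``with similar techniques.'' First I would extract an analogue of Lemma~\ref{lem:init}: under conditions \ref{condmono4br} and \ref{cond4br}, there is a vertex $v$ such that $v \tor w \implies w \to v$ holds for all $w$. The argument should be the same — look at the set $R$ of ``bad'' red arcs $(v,w)$ with $w \not\to v$, suppose the subdigraph they span has a cycle, take one of minimum length; length two is excluded by definition of $R$, and for length at least three I would use condition~\ref{cond4br} on three consecutive arcs of the cycle (after first excluding monochromatic cycles via \ref{condmono4br}, so that such a triple of consecutive red arcs... wait — here all arcs of the $R$-cycle are red, so \ref{cond4br} does not directly bite). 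This is the first place where care is needed: I would instead argue that condition~\ref{condmono4br} forbids a monochromatic (hence all-red) directed cycle outright, so the $R$-cycle, all of whose arcs are red, cannot exist at all; therefore the $R$-subdigraph is acyclic, has a sink, and that sink is the desired $v$. So the ``init'' lemma here is actually easier and only uses \ref{condmono4br}.

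Next I would prove the analogue of Lemma~\ref{lem:conn}: if there is a blue dipath from $u$ to $v$ then $u \tob v$ or $v \to u$ (or some comparable statement), using condition~\ref{cond4br} to shorten blue dipaths. Take a minimum-length blue dipath; if it has length $\geq 2$, pick the last two blue arcs $(x,y),(y,v)$ together with... — to invoke \ref{cond4br} I need a red arc feeding in, so I would look at the vertex before $x$, say $x'$, with $x' \tob x$; the three arcs $x' \to x \to y \to v$ are all blue, which again does not match the hypothesis of \ref{cond4br} (which wants exactly one red and one blue among the extreme arcs). Here the right move is to realize that the ``red--blue'' alternation in \ref{cond4br} is what lets us process paths whose arcs change color, and that monochromatic subpaths are handled by contracting strongly connected components of $D^b$. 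So the clean statement is: in $D^b$, contract each strong component; the condensation together with the red arcs still satisfies a condition of the same flavor, and any blue dipath between two vertices of different strong components, when shortened, eventually forces either $u \tob v$ or the existence of some arc from $v$ back, via \ref{cond4br} applied at the junctions where a blue and a red arc meet.

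With these two lemmas in hand, the main proof follows the template of Theorem~\ref{thm:br} verbatim in structure: let $D^b$ be the blue subdigraph, $\calK^b$ its strong components, $\preceq$ the reachability order, extended to antichains via Lemma~\ref{lem:apos}; let $\calI$ be the set of independent sets $I$ with $I \tor w \implies w \to I$ for all $w$; show $\calI \neq \varnothing$ via the init lemma; and show that any $I \in \calI$ that is not a kernel can be pushed up in the antichain order by the same surgery — take $U = V(D)\setminus(I\cup N^-(I))$, find $v \in U$ with $v \tor w \implies w \to v$ for $w \in U$, and either add $v$ to $I$ or replace $I$ by $(I\setminus N^-(v))\cup\{v\}$. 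The verification that the new set still lies in $\calI$ splits into the same three cases ($w \to I\setminus N^-(v)$, $w \to I \cap N^-(v)$, $w \not\to I$), and it is exactly in the middle case that condition~\ref{cond4br} must be invoked in place of \ref{item:b}/\ref{item:r}: given $u \in I \cap N^-(v)$ with $w \to u$ and $v' \in I'$ with $v' \tor w$, the path $v' \tor w \to u \tob v$ (using $u \tob v$, which holds because $v \not\to I$) is a directed path of the form required by \ref{cond4br}, so its four vertices induce an extra arc not ending at $w$, and one checks this extra arc forces $w \to I'$. Finally, $\alpha_I \prec \alpha_{I'}$ because, as in Theorem~\ref{thm:br}, all arcs from $I\cap N^-(v)$ to $v$ are blue (since $v \in U$) and no vertex of $I \cap N^-(v)$ lies in $v$'s strong component of $D^b$.

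The main obstacle I anticipate is precisely getting the right form of the ``conn''-type lemma: condition~\ref{cond4br} is stated for a length-three path with a prescribed red-then-blue pattern at its ends, so it does not immediately compress monochromatic paths, and one has to be careful to route all monochromatic-path issues through the strong-component condensation of $D^b$ and use \ref{condmono4br} to kill monochromatic cycles, while reserving \ref{cond4br} for the mixed-color junctions. A secondary subtlety is confirming that ``induces at least another arc not ending at $v_2$'' is exactly strong enough to make the middle case of the three-case analysis go through — in particular ruling out the degenerate possibilities (the extra arc being a loop, or coinciding with an already-present arc, or being parallel) using the standing simplicity assumption on $D$; I would check each of the finitely many possible ``extra arcs'' in the four-vertex configuration and confirm each one yields $w \to I'$.
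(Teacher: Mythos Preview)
Your overall structure---define $\calI$, show it nonempty via a red-sink argument, perform the surgery $I \mapsto (I\setminus N^-(v))\cup\{v\}$, and run the three-case analysis---matches the paper, and your identification of the middle case as the place where condition~\ref{cond4br} is applied (to the path $v'\tor w\to u\tob v$, with the extra induced arc forced to end at $v$ or $v'$) is exactly right.

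The gap is in the termination argument. You try to reuse the antichain poset from Section~\ref{subsec:poset}, but the analogue of Lemma~\ref{lem:conn} simply fails here. By condition~\ref{condmono4br} the strongly connected components of $D^b$ are singletons, so $(\calK^b,\preceq)$ is just $V(D)$ under blue reachability, and for $\alpha_I$ to be an antichain you would need that no two vertices of an independent set are joined by a blue dipath. The paper's own example---a blue directed path $u\tob x\tob v$ of length two, which satisfies \ref{condmono4br} and \ref{cond4br} vacuously---shows this is false: $\{u,v\}$ is independent but not an antichain. ``Routing through the condensation'' cannot help, because the condensation is trivial; you correctly flagged this as the main obstacle, but it is not a technicality that can be patched---it is where the antichain approach breaks.

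The paper replaces the antichain device with a different well-foundedness argument. It introduces the digraph $\calD^{\text{blue}}$ on $\calI$, with an arc $(I,I')$ whenever $I\neq I'$ and every $u\in I\setminus I'$ satisfies $u\tob I'$, and invokes a lemma of Blidia and Engel (stated and reproved as Lemma~\ref{lem:acyclic}) saying that this digraph is acyclic because the blue subdigraph of $D$ is acyclic. Since every removed vertex has a blue arc to $v$, the surgery produces an arc of $\calD^{\text{blue}}$, and termination follows. This gives no polynomial bound on the number of iterations, and the paper explicitly notes that, unlike Theorem~\ref{thm:br}, no efficient algorithm is obtained---which is consistent with the antichain machinery (and Lemma~\ref{lem:hA}) being unavailable here.
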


None of Theorem~\ref{thm:br} and Proposition~\ref{prop:4br} imply the other. Indeed, a digraph of size three such that every vertex has a blue arc to every other vertex satisfies conditions of Theorem~\ref{thm:br} but has a monochromatic directed cycle. Also, a directed blue path of length two satisfies the conditions of Proposition~\ref{prop:4br} but not those of Theorem~\ref{thm:br}.

The proof of Proposition~\ref{prop:4br} shares similarities with the proof of Theorem~\ref{thm:br}. Despite this similarity, the proof does not provide here any efficient algorithm. We  have not been able to settle the complexity of finding a kernel as in Proposition~\ref{prop:4br}. 

The proof relies on a preliminary lemma. It is a reformulation of a result by Blidia and Engel~\cite[Claim~1]{blidia1992perfectly}. We provide a proof for sake of completeness.

\begin{lem}\label{lem:acyclic}
    Consider an acyclic digraph $D$. Let $\calD$ be the digraph whose vertices are the independent sets of $D$ and where $(I,I')$ is an arc if $I \neq I'$ and $v \to I '$ for every $v \in I \setminus I'$. Then $\calD$ is acyclic.
\end{lem}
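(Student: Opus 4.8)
Let me think about this. We have an acyclic digraph $D$. We form a new digraph $\calD$ whose vertices are the independent sets of $D$, with an arc $(I, I')$ when $I \neq I'$ and every vertex in $I \setminus I'$ has an arc (in $D$) into $I'$. We want to show $\calD$ is acyclic.

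The natural strategy is to find a potential function / monovariant on independent sets of $D$ that strictly increases (or decreases) along arcs of $\calD$. Since $D$ is acyclic, it has a topological order; assign to each vertex $v$ a "height" or "rank" $h(v)$ based on, say, the longest directed path in $D$ ending at $v$ (so $v \to w$ in $D$ implies $h(v) < h(w)$). Then for an independent set $I$, I would look at something like the multiset $\{h(v) : v \in I\}$ ordered lexicographically from the top, or the maximum height in the symmetric difference — essentially the construction behind the poset of antichains in Lemma~\ref{lem:apos}, applied to the poset $(V(D), \preceq)$ where $u \preceq v$ iff there is a directed path from $u$ to $v$ in $D$ (this is a genuine partial order precisely because $D$ is acyclic).

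Here is the cleaner way to see it. Apply Lemma~\ref{lem:apos} to the poset $\calP = (V(D), \preceq)$ where $\preceq$ is reachability in $D$; the independent sets of $D$ are exactly the antichains of this poset, and we get a partial order $\preceq$ on them. I claim that $(I, I') \in A(\calD)$ implies $I \prec I'$ in this antichain order. Indeed, take any $v \in I$. If $v \in I'$, then $v \preceq v$ with $v \in I'$. If $v \in I \setminus I'$, then by the arc condition $v \to w$ in $D$ for some $w \in I'$, hence $v \prec w$ with $w \in I'$. So every element of $I$ is $\preceq$ some element of $I'$, i.e.\ $I \preceq I'$; and since $I \neq I'$, antisymmetry of the antichain order (Lemma~\ref{lem:apos}) gives $I \prec I'$ (a self-loop is excluded by hypothesis, and $I \preceq I'$ with $I \neq I'$ forces strict inequality). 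A directed cycle in $\calD$ would then produce $I \prec I$, which is impossible; hence $\calD$ is acyclic.

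The main thing to get right is the reduction to Lemma~\ref{lem:apos}: checking that reachability in an acyclic digraph really is a partial order (antisymmetry uses acyclicity crucially), that independent sets coincide with antichains, and that the arc condition of $\calD$ translates exactly into $I \preceq I'$ in the extended order. None of these steps is deep, but the translation of "$v \to I'$ for every $v \in I \setminus I'$" into "$I \preceq I'$" is the one place where one must be slightly careful to handle the vertices of $I$ that already lie in $I'$. Once that is in place, antisymmetry of the antichain order does all the work, and no explicit height function or induction is needed.
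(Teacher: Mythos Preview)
Your reduction to Lemma~\ref{lem:apos} has a genuine gap: the assertion that ``the independent sets of $D$ are exactly the antichains of this poset'' is false. Antichains of the reachability order $(V(D),\preceq)$ are always independent, but the converse fails whenever $D$ has a directed path of length at least two. For instance, with $V(D)=\{a,b,c\}$ and arcs $a\to b$, $b\to c$, the set $\{a,c\}$ is independent in $D$ but is not an antichain since $a\preceq c$. On such non-antichains the relation $\alpha\preceq\alpha'$ you use is only a preorder, not a partial order: with $I=\{a,c\}$ and $I'=\{c\}$ one has $I\preceq I'$ and $I'\preceq I$ yet $I\neq I'$. So the step ``$I\preceq I'$ and $I\neq I'$ give $I\prec I'$ by antisymmetry'' is not justified, and the argument does not close.

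Your instinct about a height function does lead to a correct proof, though. Fix an injective topological labeling $h\colon V(D)\to\mathbb{Z}$ with $u\to v\Rightarrow h(u)<h(v)$. If $(I,I')$ is an arc of $\calD$, then the vertex of maximum $h$-value in the symmetric difference $I\triangle I'$ must lie in $I'\setminus I$: were it some $v\in I\setminus I'$, the arc condition gives $v\to w$ for some $w\in I'$, whence $h(w)>h(v)$, and $w\notin I$ by independence of $I$, contradicting maximality. Thus arcs of $\calD$ strictly increase the potential ``bit-string $(\mathbbm{1}_{v\in I})_v$ read from highest $h$ downward,'' and $\calD$ is acyclic. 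This is a clean monovariant argument and is different from the paper's proof, which instead assumes a cycle $I_1,\ldots,I_k$ in $\calD$, threads an infinite sequence $v_1,v_2,\ldots$ through the $I_i$'s with $v_j=v_{j+1}$ or $v_j\to v_{j+1}$, shows the sequence never stabilizes (no $v_j$ lies in $\bigcap_i I_i$), and derives a directed cycle in $D$.
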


\begin{proof}
Assume for a contradiction that there is a sequence $I_1,\ldots,I_k$ of independent sets of $D$ such that $(I_i,I_{i+1})$ is an arc of $\calD$ for every $i\in [k]$ (with $k+1 = 1$). Up to a renumbering, we can assume there is a $v_1 \in I_1 \setminus I_2$. We build then an infinite sequence $v_1,v_2,\ldots$ of vertices such that for every $j$
\begin{itemize}
    \item $v_j \in I_i$ with $i = j \mod k$, 
    \item $v_j = v_{j+1}$ or $v_j \to v_{j+1}$ otherwise.
\end{itemize}
Remark that no $v_i$ belongs to $\bigcap_{j \in [k]}I_j$: this is true for $v_1$, and a direct induction shows that it is then true for all $j$. This implies that the sequence $(v_j)$ does not become constant from a certain index. By finiteness, the sequence has to travel along a cycle of $D$, but this contradicts its acyclicity.
\end{proof}

\begin{proof}[Proof of Proposition~\ref{prop:4br}]
Let $\calI$ be the set of independent sets $I\subseteq V(D)$ such that $I\tor w\implies w \to I$ holds for all vertices $w$. 
The set $\calI$ is not empty (just take a sink of the digraph obtained from $D$ by keeping only the red arcs). Consider the digraph $\calD^{\text{blue}}$ whose vertices are the independent sets in $\calI$ and where $(I,I')$  is an arc if $I \neq I'$ and $u \tob I '$ for every $u \in I \setminus I'$. According to Lemma~\ref{lem:acyclic} and since there is no blue cycle, $\calD^{\text{blue}}$ is acyclic.

We describe now a procedure to modify an $I\in \calI$ that it is not a kernel, in order to get a new element $I'\in \calI$ such that $(I,I')$ is an arc in $\calD^{\text{blue}}$. By finiteness, this will show the existence of a kernel.
    
    Suppose that $I \in \calI$ is not a kernel and let $U \coloneqq V(D)\setminus(I \cup N^-(I))$. This is the set of all vertices that are neither in $I$ nor absorbed by $I$. Since $I$ is not a kernel, $U$ is non-empty. 
    Take $v$ a sink of the digraph obtained from $D[U]$ by keeping only the red arcs.  
    If $I \not\to v$, then adding $v$ to $I$ leads to a $I'=I \cup \{v\} \in \calI$ such that $(I,I')$ is an arc of $\calD^{\text{blue}}$. Suppose now that $I \to v$. Define $I'$ by removing from $I$ all vertices in $N^-(v)$ and by adding $v$. The set $I'$ is independent since $v \not \to I$ by definition of $v$ and all vertices of $I$ with an arc to $v$ have been removed from $I$. Moreover, let $w\in V(D)$ and $v'\in I'$ be any vertex such that $v' \tor w$ (with possibly $v'=v$). We claim that $w \to I'$. Three cases have to be considered.

\smallskip

\begin{easylist}\ListProperties(Style1*=\scshape$\bullet$, Style2*=$\diamond\;$, Hide=2, Progressive*=3ex, Space=0.2cm, Space*=0.2cm)
    # \; {\em $w \to I \setminus N^-(v)$}. Then $w \to I'$ since $I \setminus N^-(v) \subseteq I'$ by definition.

    # \; {\em $w \to I \cap N^-(v)$}. Let $u$ be a vertex in $I \cap N^-(v)$ such that $w \to u$. The arc $(u,v)$ cannot be red because $v \in U$. Thus we have $u \tob v$. Condition \ref{cond4br} with $v_1=v'$, $v_2 = w$, $v_3 = u$, and $v_4 = v$ implies that there is another arc induced by these vertices, not ending at $w$. This arc cannot end at $u$: we have $v \not\to I$ and $u \in I$; we have $v' \not\to u$ because either $v'=v$ and this is already checked, or $v' \in I$ and we use the independence of $I$. Thus, the arc ends at $v$ or $v'$, and its tail is necessary $w$, which means $w \to I'$. (Note that when $v' \neq v$, we have $u \not\to v'$ again because of the independence of $I$.)
    
    # \; {\em $w \not\to I$}. This means that $v' = v$, since otherwise there would be a red arc from $I$ to $w$, and an arc from $w$ to $I$ by definition of $\calI$. Hence $v \tor w$. Since $v\not\to I$, the vertex $w$ does not belong to $I$. It does not belong to $N^-(I)$ either because this is the case we consider. Therefore, the vertex $w$ belongs to $U$ but this is a contradiction with the definition of $v$.
    \end{easylist}

    \smallskip
  
Therefore, $I' \in \calI$. 
Moreover, every vertex $u$ in $I\setminus I'$ is in $N^-(v)$ and since $v\not\to I$, we have $u\tob v$, therefore $(I,I')$ is an arc of $\calD^{\text{blue}}$. \end{proof}

We could not find any polynomial bound on the number of independent sets $I$ considered in the previous proof. As a result, we do not know whether any complexity result for finding a kernel in graphs satisfying conditions of Proposition~\ref{prop:4br} can be derived from the proof.

\section{Odd holes}\label{sec:odd-holes}

\subsection{Proof of Theorem~\ref{thm:cordes_nous}}

\subsubsection{Semi-kernels}

A {\em semi-kernel} is a subset $S$ of vertices that is independent and such that $N^+(S) \subseteq N^-(S)$. Note that a kernel is a semi-kernel, and that the empty set is also a semi-kernel. Galeana-Sánchez and Neumann-Lara~\cite{galeana-sanchez_kernel-perfect_1986} suggested the idea of using semi-kernels to systematize several proofs on the existence of kernels. The key property underlying this idea is the following.

\begin{lem}\label{lem:semik}
    If every non-empty induced subdigraph of a digraph $D$ has a non-empty semi-kernel, then $D$ has a kernel.
\end{lem}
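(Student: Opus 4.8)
The plan is to proceed by induction on $|V(D)|$, building the kernel greedily by repeatedly peeling off semi-kernels. The base case $V(D)=\varnothing$ is trivial (the empty set is a kernel). For the inductive step, since $D$ itself is a non-empty induced subdigraph of $D$, the hypothesis gives a non-empty semi-kernel $S$ of $D$. The idea is that $S$ should be part of the kernel, so we want to remove $S$ together with everything $S$ absorbs, namely $N^-(S)$, and recurse on what remains. Formally, set $D' \coloneqq D[V(D)\setminus (S \cup N^-(S))]$. This is an induced subdigraph of $D$ with strictly fewer vertices (since $S \neq \varnothing$), so it inherits the property that every non-empty induced subdigraph has a non-empty semi-kernel; by induction $D'$ has a kernel $S'$.

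I then claim $S \cup S'$ is a kernel of $D$. First, \emph{independence}: $S$ is independent and $S'$ is independent in $D'$ hence in $D$; it remains to check there is no arc between $S$ and $S'$. Since $S' \subseteq V(D')$, no vertex of $S'$ lies in $N^-(S)$, so there is no arc from $S'$ to $S$. Conversely, suppose $x \in S$ has an arc to some $y \in S'$; then $y \in N^+(S)$, and since $S$ is a semi-kernel, $N^+(S) \subseteq N^-(S)$, so $y \in N^-(S)$, contradicting $y \in V(D')$. Hence $S \cup S'$ is independent. Second, \emph{absorbing}: let $z \in V(D) \setminus (S \cup S')$. If $z \in N^-(S)$ then $z$ has an outneighbor in $S$, done. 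Otherwise $z \in V(D') \setminus S'$, and since $S'$ is a kernel of $D'$, $z$ has an outneighbor (in $D'$, hence in $D$) in $S'$. So every vertex outside $S \cup S'$ has an outneighbor in $S \cup S'$, and $S \cup S'$ is a kernel of $D$.

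The argument is essentially routine once the right set to delete — namely $S \cup N^-(S)$ rather than just $S$ — is identified; the semi-kernel condition $N^+(S)\subseteq N^-(S)$ is used exactly once, to rule out arcs from $S$ into the recursively constructed kernel $S'$, and this is the only delicate point. One should also double-check that deleting $S \cup N^-(S)$ genuinely decreases the vertex count, which holds because $S$ is non-empty and disjoint from $N^-(S)$ by independence (no vertex of $S$ absorbs another vertex of $S$, and $N^-(S)$ excludes $S$ by the convention in Section~\ref{subsec:terminology}). No other obstacle arises.
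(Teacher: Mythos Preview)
Your argument is correct and follows essentially the same route as the paper's proof: induction on the number of vertices, take a non-empty semi-kernel $S$, delete $S \cup N^-(S)$ (which the paper writes as $N^-[S]$), recurse to get a kernel $S'$ of the remainder, and return $S \cup S'$. You have spelled out in full the verification that $S \cup S'$ is a kernel, which the paper leaves to the reader, and your base case $V(D)=\varnothing$ is a harmless variant of the paper's single-vertex base case.
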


\begin{proof}
    We prove the result by induction on the number of vertices. 
    Let $D$ be a digraph whose non-empty induced subdigraphs all have a non-empty semi-kernel. If $D$ has exactly one vertex, it has a kernel. 
    So, suppose $D$ has at least two vertices. By the assumption, $D$ itself has a non-empty semi-kernel, which we denote by $S$. If $S$ is a kernel, we are done. Otherwise, the digraph $D-N^-[S]$ is a non-empty induced subdigraph of $D$, which implies that all its induced subdigraphs have also a non-empty semi-kernel. By induction, $D-N^-[S]$ has a kernel. This kernel forms with $S$ a kernel of $D$.
\end{proof}

With this lemma, classical theorems on existence of kernels get short proofs, like Richardson's theorem~\cite{richardson_weakly_1946}; see~\cite[Chapitre 1, Section 1.1.1.1]{langlois2023kernels} for other illustrations. This lemma was also used in the original proof of the Galeana-Sánchez--Neumann-Lara theorem. Our generalization does not depart in that regard.

\subsubsection{The proof}

Our proof, as a streamlined version of the original proof of the Galeana-Sánchez--Neumann-Lara theorem, covers the new cases easily.

\begin{proof}[Proof of Theorem~\ref{thm:cordes_nous}]
The proof works by induction on the number of vertices. 

The theorem is obviously correct for the graph reduced to a single vertex. Assume now that $D$ has at least two vertices, and let $u$ be any vertex of $D$. By induction, $D \setminus N^-[u]$ has a kernel $K$, and set $K'=K \cup \{u\}$. If $K$ does not contain any vertex of $N^+(u)$, then $K'$ is a kernel of $D$. 

We can thus assume that $K$ contains at least one vertex of $N^+(u)$, and we denote by $I$ the set $K \cap N^+(u)$. Notice that $I$ is an independent set. Let $S$ be the set of vertices $v$ of $K'$ such that there exists a directed path from a vertex in $I$ to $v$ that satisfies the following two conditions:
\begin{enumerate}[label=(\roman*)]
    \item\label{alternate} the path alternates between vertices in $K'$ and vertices not in $K'$.
    \item\label{before} for all vertices $w$ and $w'$ such that $w$ comes before $w'$ on the path and such that $w'$ is not the end vertex of the path, if $w\in N^+(w')$, then $w\not\in K'$.
\end{enumerate}
Note that $S$ is non-empty and contains in particular $I$. We claim that $S$ is actually a semi-kernel. 

To show this, we first prove that $u$ is not an element of $S$. Suppose for a contradiction that $u$ belongs to $S$. Then there is a directed path $P$ from $I$ to $u$ satisfying the two conditions~\ref{alternate} and \ref{before}. Choose such a path of minimum length and denote by $v$ its origin. It is of even length because of condition~\ref{alternate}. Together with the arc $(u,v)$, the path $P$ forms a directed cycle of odd length. We consider the following two possibilities.

\smallskip

\begin{easylist}\ListProperties(Style1*=\scshape$\bullet$, Style2*=$-$, Hide=2, Indent=0.5cm, Space1=0.4cm, Space1*=0.0cm, Space2=0.2cm, Space2*=0.2cm)
# \label{heads} {\em The cycle has two chords with consecutive heads.} In particular, it has a chord whose head is in $K'$. 
The tail is not in $K$: if the head is $u$, it is by definition of $K$; if the head is not $u$, it is by independence of $K$. The tail is not $u$ either: it would contradict the minimality of $P$.
Moreover, the tail cannot come before the head on $P$
 since this would contradict the minimality of $P$. Therefore, the tail comes after the head, which is in $K$. This contradicts condition~\ref{before}.
 # \label{odd-chord} {\em The cycle has no two chords with consecutive heads.} The cycle has thus an odd chord which we denote by $(w,w')$. The vertex $w$ is distinct from $u$ since otherwise this would contradict the minimality of $P$. The vertex $w'$ is also distinct from $u$ for the same reason. Moreover, the vertices $w$, $w'$, and $u$ cannot be in this order on the directed cycle, again for the same reason of minimality of $P$. Thus, the vertices $w$ and $w'$ are such that $w$, $u$, and $w'$ are distinct and in this order on the cycle. This shows that we cannot be in the possibility offered by the second item: any odd chord is such that $u$ is on the part of the cycle from the tail of the chord to its head. Moreover, the configuration of $w$ and $w'$ with respect to $u$ and the fact that the chord is odd makes that $w$ and $w'$ are both outside $K'$. But then any short crossing chord as in Figure~\ref{fig:cordes_nous_short_odd} would either connect two elements of $K$ or connect an element of $K$ to $u$. This contradicts the definition of $K$.
\end{easylist}

\smallskip

 We show now that $S$ is a semi-kernel. 
 Since $S$ does not contain $u$, it is a subset of $K$ and is thus an independent set. 
 Consider now a vertex $s$ of $S$ with an outneighbor $w$. Note that $w$ is not in $K$ and is distinct from $u$. 
 By definition of $s$, there is a directed path $Q$ from $I$ to $s$ satisfying the two conditions~\ref{alternate} and \ref{before}. 
 If $w$ is in $N^-(u)$, then there is an arc from $w$ to a vertex of $V(Q)\cap K \subseteq S$ since otherwise 
 the path obtained by appending $(s,w)$ and $(w,u)$ to $Q$ would satisfy the two conditions~\ref{alternate} and \ref{before} and this would imply that $u$ belongs to $S$ (indeed, by independence of $K$, no arc forbidden by \ref{before} can begin at $s$). 
 If $w$ is not in $N^-(u)$, then there is an arc from $w$ to a vertex $t$ in $K$, because this latter set is a kernel of $D \setminus N^-[u]$. Either there is such a $t$ on $Q$ and then $t\in S$, or considering any such $t$ and appending $(s,w)$ and $(w,t)$ to $Q$ satisfies the two conditions~\ref{alternate} and \ref{before}, so $t\in S$. 
 In any case, there is an arc from $w$ to an element of $S$, and $S$ is therefore a semi-kernel.

We have proved that $D$ has a semi-kernel. By induction, every proper induced subdigraph of $D$ admits a kernel, and thus a semi-kernel. Lemma~\ref{lem:semik} leads to the conclusion.
\end{proof}

\subsection{Discussion}\label{subsec:discuss-odd-holes}

There is another theorem ensuring the existence of a kernel with some ``chord'' condition on the odd cycles, namely a theorem by Duchet~\cite{duchet1980graphes} stating: {\em If every odd directed cycle has two reversible arcs, then the graph has a kernel.} (A short proof based on Lemma~\ref{lem:semik} is also possible in this case.) We do not know whether there is a common generalization of this latter theorem and Theorem~\ref{thm:cordes_nous}.

Other open questions, probably more challenging, are about algorithmic versions of the Galeana-Sánchez--Neumann-Lara theorem and its generalization, Theorem~\ref{thm:cordes_nous}. For instance, we do not know whether a kernel can be computed in polynomial time under the conditions of either of these theorems. We do not even know whether the conditions of these theorems can be checked in polynomial time. 

\section{Kernel-solvability of anti-holes}\label{sec:kernel-solvability}

 The notion of $M$-clique-acyclicity (definition given in Section~\ref{subsec:terminology}) has been introduced by Meyniel according to Duchet~\cite{duchet1987parity}. It is not difficult to check that $M$-clique-acyclicity implies clique-acyclicity. Yet, by allowing reversible arcs, it covers many interesting situations not covered by simple clique-acyclicity, while being more amenable than the general clique-acyclity. Several special cases of the Berge--Duchet conjecture mentioned in Section~\ref{subsec:clique-acyclic} have indeed been first established for $M$-clique-acyclic orientations. Moreover, deciding whether a digraph is $M$-clique-acyclic can be done in polynomial time, while deciding clique-acyclicity is \NP-complete even for perfect graphs~\cite{andres2015perfect}.

Boros and Gurvich~\cite[Observation 3.13]{boros_perfects_2006} claimed that the anti-hole $\overline{C}_7$ is kernel-$M$-solvable but not kernel-solvable. This is actually not true as shown by Figure~\ref{fig:C_7}. Determining which odd anti-holes are kernel-$M$-solvable is a question that remains to be settled. More importantly, we know by the Boros--Gurvich theorem that a graph is perfect if and only if it is kernel-solvable; it is still possible that kernel-$M$-solvability already characterizes perfectness.

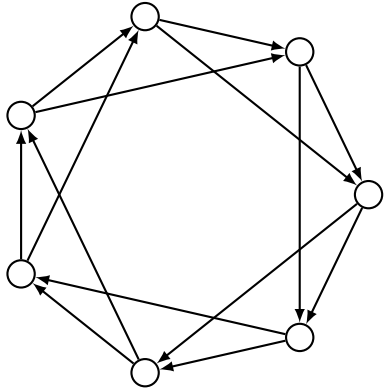
\begin{figure}
    \centering

\begin{tikzpicture} 
 [
    scale=.81,
    >=latex,
    thick,
    vertex/.style={shape=circle,draw=black},
    qk/.style={vertex,fill=black!25},
    sink/.style={vertex,fill=black!100}
  ]

\node[shape=circle,draw=black] (6) at (360/7*3:3cm){} ;
\node[shape=circle,draw=black] (3) at (360/7*1:3cm){} ;
\node[shape=circle,draw=black] (7) at (360/7*6:3cm){} ;
\node[shape=circle,draw=black] (4) at (360/7*4:3cm){} ;
\node[shape=circle,draw=black] (1) at (360/7*2:3cm){} ;
\node[shape=circle,draw=black] (5) at (360/7*7:3cm){} ;
\node[shape=circle,draw=black] (2) at (360/7*5:3cm){} ;

\path [->,>=latex](1) edge node[left] {} (3);
\path [->,>=latex](3) edge node[left] {} (5);
\path [->,>=latex](5) edge node[left] {} (7);
\path [->,>=latex](7) edge node[left] {} (2);
\path [->,>=latex](2) edge node[left] {} (4);
\path [->,>=latex](4) edge node[left] {} (6);
\path [->,>=latex](6) edge node[left] {} (1);

\path [->,>=latex](1) edge node[left] {} (5);
\path [->,>=latex](5) edge node[left] {} (2);
\path [->,>=latex](2) edge node[left] {} (6);
\path [->,>=latex](6) edge node[left] {} (3);
\path [->,>=latex](3) edge node[left] {} (7);
\path [->,>=latex](7) edge node[left] {} (4);
\path [->,>=latex](4) edge node[left] {} (1);

    \end{tikzpicture}
    \caption{\label{fig:C_7} A simple clique-acyclic orientation of $\overline{C}_{7}$ with no kernel. }
\end{figure}

\begin{proof}[Proof of Proposition~\ref{prop:odd_anti-hole}]
    Denote by $v_1,\ldots,v_n$ the vertices of the anti-hole, so that $v_iv_{i+1}$ is a non-edge for all $i$ (and with $v_{n+1} \coloneqq v_1$). Consider a simple clique-acyclic orientation of that anti-hole.
    
    We first establish that there is an $i^\star$ such that the edges $v_{i^\star-2}v_{i^\star}$ and $v_{i^\star}v_{i^\star+2}$ are both oriented towards $v_{i^\star}$. Suppose for a contradiction that for every $i$, the arc $(v_i,v_{i-2})$ or the arc $(v_i,v_{i+2})$ exists. Without loss of generality, we can assume that the arc $(v_1,v_3)$ exists. Then $(v_3,v_5)$, $(v_5,v_7)$, ..., $(v_n,v_2)$, $(v_2,v_4)$, etc. exist as well, i.e., $(v_i,v_{i+2})$ exists for all $i$. (Here, we use that $n$ is odd.) Since the orientation is clique-acyclic, we get that $(v_i,v_{i+4})$ exists for all $i$. Repeating this argument, we get that an edge $v_iv_j$ with $i<j$ and $2\leq j-i \leq n-2$ is oriented $(v_i,v_j)$ precisely when $j-i$ is even. Since $n \geq 9$, the arc $(v_1,v_7)$ exists in $D$. The arcs $(v_7,v_4)$ and $(v_4,v_1)$ also exist; this contradicts the simple clique-acyclic orientation of $D$, and shows the existence of the desired $i^\star$.

    We know that $D-N^-[v_{i^\star}]$ admits a kernel $K$, because $\overline C_n$ is minimally imperfect and the Boros--Gurvich theorem applies. If $K$ does not intersect $N^+(v_{i^\star})$, then $K \cup \{v_{i^\star}\}$ is a kernel of $D$. So, assume that $K$ intersects $N^+(v_{i^\star})$, and let $x$ be a vertex in this intersection. We prove now that there exists a semi-kernel in $D$. Since every strict subdigraph of $D$ has a kernel by the Boros--Gurvich theorem, Lemma~\ref{lem:semik} will lead to the desired conclusion. If $x$ is a sink of $D$, then $x$ is a semi-kernel, and we are done. 
    
    We can thus assume that $x$ is not a sink of $D$. Pick an outneighbor $y$ of $x$. Note that $x$ has no outneighbor in $N^-[v_{i^\star}]$ by the simple clique-acyclicity of $D$. In particular, $y$ does not belong to $N^-[v_{i^\star}]$. Hence, there is vertex $z$ in $K$ such that the arc $(y,z)$ exists. We have $K=\{x,z\}$ because the independence number of $D - N^-[v_{i^\star}]$ is at most two, $D$ being an anti-hole. Suppose for a contradiction that $K$ is not a semi-kernel of $D$. Hence, there exists a vertex $t$ in $N^+(K) \setminus N^-(K)$. The vertex $t$ is in $N^-[v_{i^\star}]$ because $K$ is kernel of $D-N^-[v_{i^\star}]$, and it is not in $N^+(x)$ as $x$ has no outneighbor in $N^-[v_{i^\star}]$. Thus, $t$ is in $N^+(z)$. Notice that $z$ is neither $v_{i^\star-1}$, nor $v_{i^\star+1}$ since otherwise, this would imply that $x$ is $v_{i^\star}$ or an inneighbor of $v_{i^\star}$ because $\{x,z\}$ is independent. Therefore, there is an arc $(v_{i^\star},z)$, which contradicts, together with the existence of the arc $(z,t)$, either the simplicity of the orientation (in case $t=v_{i^\star}$), or the clique-acyclicity because the arc $(t,v_{i^\star})$ exists (in case $t \neq v_{i^\star}$).
\end{proof}

\bibliographystyle{amsplain}

\bibliography{quasi-kernel}

\end{document}